\documentclass[a4paper,12pt]{amsart}
\usepackage{amsmath, amssymb, amsthm, xcolor,enumerate, enumitem}
\usepackage{tikz-cd}
\usepackage{bbm}
\usepackage{graphicx}
\usepackage{marginnote}
\usepackage[hidelinks]{hyperref}

\usepackage{perpage}
\newcounter{mparcnt}
\MakePerPage{mparcnt}

\newcounter{counter}

\newtheorem{theorem}[counter]{Theorem}
\newtheorem{lemma}[counter]{Lemma}

\newtheorem{definition/proposition}[counter]{Definition/Proposition}
\newtheorem{corollary}[counter]{Corollary}
\newtheorem{proposition}[counter]{Proposition}
\newtheorem*{theorem*}{Theorem}
\numberwithin{equation}{section}

\theoremstyle{definition}
\newtheorem{definition}[counter]{Definition}

\newtheorem{remark}[counter]{Remark}
\newtheorem{example}[counter]{Example}

\numberwithin{equation}{section}
\newcommand{\D}{\mathcal{D}}

\renewcommand{\O}{\mathcal{O}}

\newcommand{\G}{\mathcal{G}}
\newcommand{\Z}{\mathcal{Z}}
\newcommand{\id}{\mathrm{id}}

\newcommand{\Corr}{\mathrm{Corr}}

\newcommand{\N}{\mathbb{N}}
\newcommand{\C}{\mathcal{C}}
\newcommand{\Aut}{\mathrm{Aut}}

\newcommand{\Hilb}{\mathrm{Hilb}}
\newcommand{\Irr}{\mathrm{Irr}}

\newcommand{\End}{\mathrm{End}}
\newcommand{\Hom}{\mathrm{Hom}}

\newcommand{\dist}{\mathrm{dist}}

\title[Tensor category equivariant $\D$-stability]{Equivariant $\D$-stability for Actions of Tensor Categories}

\author{Samuel Evington}
\address{\hskip-\parindent Samuel Evington, Mathematical Institute, University of Münster, Einsteinstrasse 62, 48149 Münster, Germany}
\email{evington@uni-muenster.de}

\author{Sergio Girón Pacheco}
\address{\hskip-\parindent Sergio Girón Pacheco, Department of mathematics, KU Leuven, Celestijnenlaan 200B, 3001, Leuven, Belgium.}
\email{sergio.gironpacheco@kuleuven.be}

\author{Corey Jones}
\address{\hskip-\parindent Corey Jones, Department of Mathematics, North Carolina State University, North Carolina, USA}
\email{cmjones6@ncsu.edu}

\thanks{SE was partially supported by the Deutsche Forschungsgemeinschaft (DFG, German Research Foundation) – Project-ID 427320536 – SFB 1442, as well as under Germany's Excellence Strategy EXC 2044 390685587, Mathematics M{\"u}nster: Dynamics–Geometry–Structure, and was also partially supported by ERC Advanced Grant 834267 - AMAREC; SGP was partially supported by the Ioan and Rosemary James Scholarship awarded by St John's College and the Mathematical Institute, University of Oxford, as well as by projects G085020N and 1249225N funded by the Research Foundation Flanders (FWO); CJ was partially supported by NSF Grants DMS-2100531 and DMS-2247202}

\begin{document}
\begin{abstract}
We introduce a notion of equivariant $\D$-stability for actions of unitary tensor categories on C$^*$-algebras. We show that, when $\D$ is strongly self-absorbing, equivariant $\D$-stability of an action is equivalent to a unital embedding of $\D$ into a certain subalgebra of Kirchberg's central sequence algebra.\ We use this to show $\Z$-stability for a large class of AF-actions.
\end{abstract}
\maketitle
\numberwithin{counter}{section}
\section{introduction}
Tensorial absorption of certain C$^*$-algebras plays a crucial role in the classification of simple amenable C$^*$-algebras and the investigation of their structure.\ 
In the infinite setting, tensorial absorption of the Cuntz algebra $\O_\infty$, a property known as $\O_\infty$-stability, powers the Kirchberg--Phillips theorem (\cite{KIRCH95,Phil00,KIPI00}). 
In the finite setting, tensorial absorption of the Jiang--Su algebra $\Z$ has proven to be of fundamental importance (\cite{Jiang-Su,TO08,GLN15,EGLN15,TWW17,DIMNUC21,Class1}). 

Abstracting the key properties of the UHF algebras $\mathbb{M}_{n^\infty}$, the Cuntz algebras $\O_2$ and $\O_\infty$, and the Jiang--Su algebra $\Z$, Toms and Winter introduced the notion of a strongly self-absorbing C$^*$-algebra $\D$ and developed a unified theory of $\D$-stable C$^*$-algebras including machinery for detecting $\D$-stability in examples (\cite{TOWI07}).  
The notion of $\D$-stability has subsequently been extended to the setting of group actions on C$^*$-algebras and has been successfully used both for the classification of group actions on C$^*$-algebras (\cite{IZU04,SZ18Kirchberg,GASZ22,IZMA21}) and to identify $\Z$-stability of crossed product C$^*$-algebras (\cite{WOU23,GAHIVA22,MASA12,MASA14}).   

In this paper, we introduce and study equivariant $\D$-stability for an action of a unitary tensor category $\C$. 
Actions of tensor categories on operator algebras generalize group actions and give a characterisation of ``non-invertible'' symmetry. They have largely been considered in the setting of von Neumann algebras, where they play a fundamental role in the theory of finite index subfactors (\cite{MR1278111,MR1334479,MR1642584,MR1966524,MR3166042}). Recently there has been significant interest in extending these ideas to the setting of C*-algebras (\cite{IZU02,MR4419534,EVGI21,CHROJO22,WA90,KW00}). 

Employing the intertwining techniques developed in \cite{GINE23}, we shall prove that equivariant $\D$-stability of an action $\C\curvearrowright A$ on a C$^*$-algebra $A$ is equivalent to the existence of a unital embedding of $\D$ into an appropriate subalgebra of the Kirchberg central sequence algebra of $A$ (Theorem \ref{thm:eqMcDuff}). 
This result generalises \cite[Theorem 2.2]{TOWI07} and its group equivariant counterpart \cite[Corollary 3.8]{SZI} under the assumption that the acting group is discrete.\
We apply this characterisation to large class of AF-actions of unitary tensor categories (in the sense of \cite{CHROJO22}) and provide an easily verifiable criterion for determining equivariant $\D$-stability (Theorem \ref{thm:BYintrocentralsequence}). In particular, we show that these AF-actions are always equivariantly $\Z$-stable (Corollary \ref{cor:stationaryareZstable}).

To study equivariant $\D$-stability for group actions, the standard approach is to consider the induced group action on the algebra of central sequences, and then consider the fixed point subalgebra of the induced action (see for example \cite{MASA12,MASA14}). 
This doesn't directly generalise to actions of unitary tensor categories, as an action $\C\curvearrowright A$ may not preserve central sequences. 
For example, if $A$ is simple purely infinite, then the action may be assumed to be given by endomorphisms (in the sense of \cite[Section~4]{IZU02}). 
If the simple objects in $\C$ are not invertible, then the endomorphisms inducing the action on $A$ need not be surjective and so need not preserve central sequences.

Our solution to this problem is to directly introduce a C$^*$-subalgebra of the central sequence algebra of $A$ that plays the role of the algebra of central sequences fixed under the action of the tensor category. 
The appropriate C$^*$-subalgebra is obtained by considering bounded sequences in $A$ whose the left and right actions asymptotically agree for all the Hilbert $A$-bimodules associated to the action (see Proposition \ref{prop:fixedcentralseqalg}). 
Since the tensor unit of $\C$ is mapped to the trivial $A$-bimodule, all such sequences are necessarily central sequences. 

\subsection*{Acknowledgements} The authors would like to thank Robert Neagu and Stuart White for interesting discussions related to the topic of this paper.

\section{Preliminaries}

We assume the reader is familiar with the language of tensor categories and C$^*$-tensor categories (see for example \cite{MR3242743} and \cite[Section 1.3 and Section 2]{GINE23}). We also assume some familiarity with both the theory of Hilbert modules (see for example \cite{Hilbertmodules}) and with strongly self-absorbing C$^*$-algebra as introduced in \cite{TOWI07}.

Let $A$ be a C$^*$-algebra. 
An \emph{$A$-$A$-correspondence} is a $^*$-homomorphism $\alpha:A\rightarrow \mathcal{L}(X)$ into the adjointable operators of a right Hilbert $A$-module $X$.
This endows $X$ with the structure of a Hilbert $A$-bimodule in the sense of \cite{KWP04}.

For $x\in X$ and $a\in A$ we denote the left action of $a$ on $x$ by $a\rhd x$ and the right action of $a$ on $x$ by $x\lhd a$. We denote the internal tensor product of Hilbert bimodules by $\boxtimes$. 
We say $\alpha$ is \emph{non-degenerate} if $X = \overline{A\rhd X} = \overline{X \lhd A}$, which for $A$ unital occurs precisely when the left and right actions are unital.

Let $\Corr_0(A)$ be the C$^*$-tensor category of non-degenerate Hilbert $A$-$A$-correspondences under the flipped internal tensor product of the bimodules, i.e.\ the monoidal product of $A$-$A$-correspondences $\alpha:A\rightarrow \mathcal{L}(X)$ and $\beta:A\rightarrow \mathcal{L}(Y)$ is the $A$-$A$-correspondence given by the internal tensor product $Y \boxtimes X$ (see \cite[Section 1.2]{GINE23}). 

\begin{definition}
   An action of a C$^*$-tensor category $\mathcal{C}$ on $A$ is a C$^*$-tensor functor $(G,J):\C\rightarrow \Corr_0(A)$. 
\end{definition}

An important example of a C$^*$-tensor category is the category of finite dimensional $\Gamma$-graded Hilbert spaces, where $\Gamma$ is a discrete group. We denote this category by $\Hilb(\Gamma)$.  We write $\mathbb{C}_{\gamma}$ for $\Gamma$ graded Hilbert space that is $\mathbb{C}$ in the $\gamma$-th grading and zero elsewhere.

We say an action $(G,J):\Hilb(\Gamma)\rightarrow \Corr_0(A)$ acts by automorphisms if $G(\mathbb{C}_\gamma)\cong {}_{\alpha_\gamma}A$ for all $\gamma\in \Gamma$ where ${}_{\alpha_\gamma}A$ denotes the trivial right Hilbert $A$-module endowed with the left action $a \rhd x = \alpha_\gamma(a)x$ for some $\alpha_\gamma \in \Aut(\Gamma)$. Note $\Hilb(\Gamma)$ actions by automorphism on $A$ coincide with cocycle actions of $\Gamma$ on $A$ in the sense of \cite[Section 2.1]{IZU04}.

We now recall the theory of dual objects in C$^*$-tensor categories as introduced in \cite{LONRO97}. A C$^*$-tensor category $\mathcal{C}$ is said to be \emph{rigid} if every $X\in \C$ admits a dual object. That is, there exists $\overline{X}\in \mathcal{C}$ and maps $R:1_{\C}\rightarrow \overline{X}\otimes X$ and $\overline{R}:1_{\C}\rightarrow X\otimes \overline{X}$ such that the zig-zag relations hold. Surpressing the associator and unitor maps these relations are
\begin{align}
    (\overline{R}^*\otimes \id_X)(\id_{X}\otimes R)&=\id_X,\\
    (R^*\otimes \id_{\overline{X}})(\id_{\overline{X}}\otimes \overline{R})&=\id_{\overline{X}}. 
\end{align}
Any rigid C$^*$-tensor category that admits subobjects is semisimple and hence every object can be decomposed as a finite direct sum of simple objects. When $\C$ is semisimple, we denote by $\Irr(\mathcal{C})$ a choice of isomorphism classes for simple objects.
\begin{definition}
    A \emph{unitary tensor category} $\C$ is a rigid, semisimple, C$^*$-tensor category with simple unit object, i.e. $\End(1_{\C})\cong \mathbb{C}$. Moreover, $\C$ is called a \emph{unitary fusion category} if it has finitely many isomorphism classes of simple objects.
\end{definition}
In \cite{KWP04} Kajiwara, Pinzari and Watatani characterise which objects in $\Corr_0(A)$ have duals. It is shown in \cite[Theorem 4.13]{KWP04} that if $E$ is a non-degenerate Hilbert $A$-bimodule admitting a dual object, then $E$ admits a compatible left $A$-valued inner product making $E$ a bi-Hilbertian C$^*$-bimodule in the sense of \cite[Definition 2.3]{KWP04}. In this case, its dual is isomorphic to the contragredient bimodule $\overline{E}$.
\begin{definition}\label{def:cocyclemorph}
If $E$ is a bi-Hilbert $A$-bimodule with left inner product ${}_A\langle\cdot,\cdot\rangle$ and right inner product $\langle \cdot, \cdot\rangle_A$ then its \emph{contragredient bimodule} $\overline{E}$ is the bi-Hilbertian $A$-bimodule with underlying vector space $\overline{E}=\{\overline{\eta} : \eta\in E\}$ with the conjugate vector space structure, and left and right $A$-actions defined by
\begin{equation}
    a\rhd\overline{\eta}:=\overline{\eta\lhd a^*},\quad \overline{\eta}\lhd a:=\overline{a^*\rhd \eta}
\end{equation}
for $a\in A$ and $\eta\in E$. The left and right inner products of $\overline{E}$ are defined by
\begin{equation}
    \langle \overline{\eta},\overline{\xi}\rangle_A:={}_A \langle \eta,\xi\rangle,\quad {}_A\langle \overline{\eta},\overline{\xi}\rangle:=\langle \eta,\xi\rangle_A.
\end{equation}
for $\eta,\xi\in E$.
\end{definition}
We will call Hilbert bimodules which admit dual objects \emph{dualisable}. In this case, we may always assume that the dual is given by the contragredient bimodule.

Notions of equivariant morphisms between two actions $(G,J)$ and $(H,I)$ of a C$^*$-tensor category $\C$ on a C$^*$-algebras $A$ and $B$ have appeared in the literature under (slightly) different names (see \cite{CHROJO22,YUKAKU23,GINE23} for example). 
We shall use a definition of cocycle morphisms given by \cite[Lemma 3.8]{CHROJO22} in the unital setting and by \cite[Lemma 3.11]{GINE23} in general.

\begin{definition}
A \emph{cocycle morphism} $(\varphi,\{h^X\}_{X\in \C}):(G,J)\rightarrow (H,I)$ consists of a $^*$-homomorphism $\varphi:A\rightarrow B$ and linear maps $h^X:G(X)\rightarrow H(X)$ for each $X\in \C$ such that
\begin{enumerate}[label=\textit{(\roman*)}]
\item $h^{X}(a\rhd x \lhd a')=\varphi(a)\rhd h^{X}(x)\lhd \varphi(a')$ for any $a,a'\in A$;\label{item:linearmap1}
\item
for any morphism $f\in \Hom(X,Y)$, $H(f)\circ h^{X}=h^{Y}\circ G(f)$;\label{item:linearmap2}
\item
$\varphi(\langle x , y\rangle_A)=\langle h^{X}(x) , h^{X}(y)\rangle_B$ for any $x,y\in G(X)$; \label{item:linearmap3}
\item the diagram:\label{item:linearmap4}
\[
\begin{tikzcd}
G(Y)\boxtimes G(X)
\arrow[swap]{d}{h^{Y}\boxtimes h^{X}}
\arrow{r}{J_{X,Y}}
& G(X\otimes Y)\arrow{d}{h^{X\otimes Y}}
 \\
H(Y)\boxtimes H(X)
\arrow{r}{I_{X,Y}}
& H(X\otimes Y)
\end{tikzcd}
\]
commutes;
\item $h^{1_{\C}}:A\to B$ is given by $h^{1_{\C}}(a)=\varphi(a)$ for any $a\in A$.\label{item:linearmap5}
\end{enumerate}  
If $h^X$ are bijective for every $X\in \C$ then $(\varphi,\{h^X\}_{X\in \C})$ is called a \emph{cocycle conjugacy}.
\end{definition}

Let $A$ be a separable C$^*$-algebra and $\C$ be a semisimple C$^*$-tensor category with countably many isomorphism classes of simple objects. Cocycle morphisms $(\varphi,\{h^X\}_{X\in \C})$ and $(\psi,\{l^X\}_{X\in \C})$ from $(G,J)$ to $(H,I)$ are said to be \emph{approximately unitarily equivalent}, if there exists a sequence of unitaries in the multiplier algebra $(u_n)_{n\in \N}\in M(B)$ such that 
\begin{equation}
    \lim_{n \to \infty}\|u_n\rhd h^X(x)\lhd u_n^*-l^X(x)\| = 0
\end{equation} for all $X\in \C$ and $x\in F(X)$ (see \cite[Definition C]{GINE23}).

\section{A subalgebra of Kirchberg's central sequences}\label{sec:centralseq}
Let $A$ be a C$^*$-algebra. We set $A_\infty = \ell_\infty(A)/c_0(A)$. 
Given a Hilbert $A$-$A$ bimodule $E$, we endow $E_{\infty} = \ell_\infty(E)/c_0(E)$ with the natural Hilbert $A_{\infty}$-bimodule structure and we identify $E$ with the image of constant sequences in $E_{\infty}$.

For any C$^*$-subalgebra $B\subset A_{\infty}$ the relative commutant and annihilator are given by
\begin{align}
      A_{\infty}\cap B' &:=\{a\in A_{\infty}: ab=ba\ \forall b\in B\},\\
      A_{\infty}\cap B^{\perp}&:=\{a \in A_{\infty}: aB=Ba=0\}.
\end{align}
Following Kirchberg (see \cite{KIRCH06}), we consider the quotient 
\begin{equation}
    F(B,A_{\infty}):=(A_{\infty}\cap B')/A_{\infty}\cap B^{\perp}.
\end{equation}
Of special interest is the case where $B$ is the embedding of  $A$ into $A_{\infty}$ as constant sequences. In this case we denote $F(A,A_{\infty})=F(A)$ and call it the \emph{Kirchberg central sequence algebra} of $A$. Note that $F(A)$ is unital whenever $A$ is $\sigma$-unital. Indeed, any sequential approximate unit $(e_n)_{n\in \N}$  for $A$ represents the identity in $F(A)$.

We now introduce a subalgebra of Kirchberg's central sequence algebra which plays the role of the fixed points algebra of $F(A)$ under the action of a unitary tensor category $(G,J):\C \curvearrowright A$.

Firstly, given a Hilbert $A$-bimodule $E$, We set
 \begin{equation}
     A_{\infty}\cap E':=\{a\in A_\infty\cap A':a\rhd x-x\lhd a=0 \, \text{for all}\ x\in E\subseteq E_{\infty}\}.
 \end{equation}
Note that if $E$ is the trivial $A$-$A$ bimodule this agrees with usual definition of $A_\infty \cap A'$. In general, 
$A_{\infty}\cap E'$ is a norm closed subalgebra of $A_{\infty}$. If we also consider the dual bimodule $\overline{E}$, we get a C$^*$-subalgebra.

\begin{lemma}\label{lem:centralisingalg}
    Let $A$ be a C$^*$-algebra and $E$ be a dualisable Hilbert $A$-bimodule. Then the norm closed algebra
    $(A_{\infty}\cap E')\cap (A_{\infty}\cap \overline{E}')$
    is a C$^*$-algebra.
\end{lemma}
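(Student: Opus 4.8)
The plan is to reduce everything to showing that $S := (A_{\infty}\cap E') \cap (A_{\infty}\cap \overline{E}')$ is closed under the adjoint. Indeed, by the discussion preceding the lemma both $A_{\infty}\cap E'$ and $A_{\infty}\cap \overline{E}'$ are norm-closed subalgebras of $A_\infty$, hence so is their intersection $S$; and a norm-closed subalgebra of a C$^*$-algebra is itself a C$^*$-algebra exactly when it is $*$-closed. So the whole content is the inclusion $S^* \subseteq S$.

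To prove this, I would fix $a \in S$, pick a bounded representative $a = [(a_n)_n]$ so that $a^* = [(a_n^*)_n]$, and check that $a^*$ satisfies the three conditions defining $S$. That $a^* \in A_\infty \cap A'$ is automatic, since the relative commutant $A_\infty \cap A'$ is a C$^*$-algebra. For the two bimodule conditions the engine is the description of the contragredient actions in Definition~\ref{def:cocyclemorph}, namely $b \rhd \overline{\eta} = \overline{\eta \lhd b^*}$ and $\overline{\eta}\lhd b = \overline{b^*\rhd \eta}$ for $b \in A$, $\eta \in E$. Substituting $b = a_n$ and using conjugate-linearity of $\xi \mapsto \overline{\xi}$ gives $a_n \rhd \overline{\eta} - \overline{\eta}\lhd a_n = \overline{\eta \lhd a_n^* - a_n^* \rhd \eta}$, so the hypothesis $a \in A_\infty \cap \overline{E}'$ forces $a^* \rhd \eta - \eta \lhd a^* = 0$ in $E_\infty$, i.e.\ $a^* \in A_\infty \cap E'$. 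Symmetrically, substituting $b = a_n^*$ gives $a_n^* \rhd \overline{\eta} - \overline{\eta}\lhd a_n^* = \overline{\eta\lhd a_n - a_n \rhd \eta}$, so the hypothesis $a \in A_\infty \cap E'$ forces $a^* \in A_\infty \cap \overline{E}'$. Combining these, $a^* \in S$, so $S = S^*$ and $S$ is a C$^*$-algebra.

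The single point that needs genuine care — and which I expect to be the main (if modest) obstacle — is the interplay between $E$ and $\overline{E}$ at the level of the sequence algebras: I must know that $(\xi_n)_n \in c_0(E)$ if and only if $(\overline{\xi_n})_n \in c_0(\overline{E})$, equivalently that conjugation descends to a (conjugate-linear) homeomorphism $E_\infty \to \overline{E}_\infty$ intertwining the $A_\infty$-actions via the formulas above, so that the constant-sequence elements $\overline{\eta}$ of $\overline{E}$ really are parametrised by the $\eta \in E$ and the computations in the previous paragraph are legitimate. This is precisely where I would invoke that on a dualisable — hence bi-Hilbertian — bimodule the left and right $A$-valued inner products induce equivalent norms (see \cite{KWP04}); granting this, the two displayed identities go through verbatim and yield the claim.
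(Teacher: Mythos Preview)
Your proposal is correct and follows essentially the same argument as the paper: reduce to $*$-closure and use the contragredient formulas to trade $a^*$ acting on $E$ for $a$ acting on $\overline{E}$ (and vice versa). You are in fact slightly more careful than the paper, which writes the bare equality $\|a^*\rhd\eta-\eta\lhd a^*\|=\|\overline{a^*\rhd\eta-\eta\lhd a^*}\|$ without comment; your remark that the left and right norms on a dualisable bimodule are merely equivalent (via \cite{KWP04}) is exactly what justifies that step.
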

\begin{proof}
    Let $B=(A_{\infty}\cap E')\cap (A_{\infty}\cap \overline{E}')$, as the intersection of two closed subalgebras of $A_\infty$, it follows that $B$ is also a closed subalgebra of $A_{\infty}$. It remains to show that $B$ is a $^*$-subalgebra of $A_{\infty}$. Let $a\in B$ and $\eta\in E$ then
   \begin{equation}
       \|a^*\rhd \eta-\eta\lhd a^*\|=\|\overline{a^*\rhd \eta-\eta\lhd a^*}\|=\|\overline{\eta}\lhd a-a\rhd \overline{\eta}\|=0
   \end{equation}
   as $a\in A_{\infty}\cap \overline{E}'$. Similarly, $a^*\rhd x-x\lhd a^*=0$ for all $x\in \overline{E}$. 
\end{proof}

It's easy to see that, when $E$ is non-degenerate, the annihilator $A_\infty \cap A^\perp$ is always a subalgebra of $A_{\infty}\cap E'$ and that, when $A$ is also $\sigma$-unital, any approximate unit $(e_n)_{n=1}^\infty$ defines an element of $A_{\infty}\cap E'$.
We are now ready to define our subalgebra of Kirchberg's central sequence algebra.
\begin{proposition}\label{prop:fixedcentralseqalg}
    Let $\mathcal{C}$ be a unitary tensor category and $(G,J):\mathcal{C}\rightarrow \Corr_0(A)$ an action. Then 
    \begin{equation}\label{eqn:fixedcentralseqalg}
        F(A)^G:=\frac{\bigcap_{X\in \C} A_{\infty}\cap G(X)'}{A_{\infty}\cap A^{\perp}}
    \end{equation}
    is a C$^*$-algebra. Moreover, if $A$ is $\sigma$-unital $F(A)^G$ is unital.
\end{proposition}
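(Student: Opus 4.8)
The plan is to verify three things in turn: that the numerator $B:=\bigcap_{X\in\C}A_\infty\cap G(X)'$ is a C$^*$-subalgebra of $A_\infty$; that $A_\infty\cap A^\perp$ sits inside $B$ as a closed two-sided ideal; and that, when $A$ is $\sigma$-unital, an approximate unit of $A$ represents a unit of the quotient.

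First, $B$ is a norm-closed subalgebra of $A_\infty$, being an intersection of such, so the issue is self-adjointness. Here I would use that $\C$ is rigid: for each $X\in\C$ the dual $\overline X$ also lies in $\C$, and since $(G,J)$ is a C$^*$-tensor functor, $G(\overline X)$ is a dual of $G(X)$ in $\Corr_0(A)$; by \cite[Theorem~4.13]{KWP04} this forces $G(X)$ to be a bi-Hilbertian $A$-bimodule with $G(\overline X)\cong\overline{G(X)}$. Since $A_\infty\cap E'$ depends only on the isomorphism class of the $A$-bimodule $E$ (a bimodule isomorphism extends to an $A_\infty$-bimodule isomorphism identifying the two conditions defining $A_\infty\cap E'$), I can rewrite
\begin{equation*}
B=\bigcap_{X\in\C}\bigl((A_\infty\cap G(X)')\cap(A_\infty\cap\overline{G(X)}')\bigr),
\end{equation*}
whereupon each factor is a C$^*$-algebra by Lemma~\ref{lem:centralisingalg}, and an arbitrary intersection of C$^*$-subalgebras of $A_\infty$ is again one. (To sidestep set-theoretic concerns one can index the intersection by $\Irr(\C)$, using that every object is a finite direct sum of simples, that $A_\infty\cap(E\oplus F)'=(A_\infty\cap E')\cap(A_\infty\cap F')$, and that $\Irr(\C)$ is closed under duality.) This first step is the only place with real content: the crux is transporting dualisability through $G$ so that Lemma~\ref{lem:centralisingalg} becomes applicable.

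Next I would check the ideal claim. The set $A_\infty\cap A^\perp$ is the usual ideal from Kirchberg's construction and is a closed two-sided ideal of $A_\infty\cap A'$. Each $A_\infty\cap G(X)'$ is by definition a subalgebra of $A_\infty\cap A'$, so $B\subseteq A_\infty\cap A'$; and since each $G(X)$ is non-degenerate, $A_\infty\cap A^\perp\subseteq A_\infty\cap G(X)'$ for every $X$ (if $aA=Aa=0$ then, approximating $x\in G(X)$ by elements of $A\rhd G(X)$ and of $G(X)\lhd A$, one gets $a\rhd x=0=x\lhd a$), whence $A_\infty\cap A^\perp\subseteq B$. Being a two-sided ideal of the larger algebra $A_\infty\cap A'$ and contained in $B$, it is a closed two-sided ideal of $B$, hence automatically self-adjoint, so $F(A)^G=B/(A_\infty\cap A^\perp)$ is a C$^*$-algebra.

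Finally, suppose $A$ is $\sigma$-unital and let $(e_n)_{n\in\N}$ be a sequential approximate unit. As noted just before the statement, $e_n\rhd x\to x$ and $x\lhd e_n\to x$ for every $x$ in a non-degenerate correspondence, so the class $e=[(e_n)]\in A_\infty$ lies in each $A_\infty\cap G(X)'$ and hence in $B$. Since $e$ already represents the unit of $F(A)=(A_\infty\cap A')/(A_\infty\cap A^\perp)$ and $F(A)^G$ is the quotient of the subalgebra $B$ by the same ideal, $[e]$ is a unit for $F(A)^G$.
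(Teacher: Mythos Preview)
Your proposal is correct and follows essentially the same approach as the paper's proof: both use rigidity of $\C$ to see that $G(\overline{X})\cong\overline{G(X)}$ and then invoke Lemma~\ref{lem:centralisingalg} for self-adjointness, non-degeneracy of the bimodules for the ideal containment, and an approximate unit for unitality. Your write-up is somewhat more explicit (the rewriting of $B$ to apply the lemma, the reduction to $\Irr(\C)$, and the justification of why $A_\infty\cap A^\perp$ is an ideal of $B$), but the underlying argument is the same.
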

\begin{proof}
 As $\C$ is rigid, any $X\in \C$ has a dual object $(\overline{X},R,\overline{R})$. So each correspondence $G(X)$ also has a dual object in $\Corr_0(A)$ given by $(G(\overline{X}),J_{\overline{X},X}^* G(R), J_{X,\overline{X}}^* G(\overline{R}))$. Therefore, $G(\overline{X})\cong \overline{G(X)}$ and it follows from Lemma \ref{lem:centralisingalg} that $\bigcap_{X\in \C} A_{\infty}\cap G(X)'$ is a C$^*$-subalgebra of $A_\infty \cap A'$. 
 
 Since every object in $\Corr_0(A)$ is a non-degenerate bimodule, it follows that $A_{\infty}\cap A^{\perp}\subseteq A_{\infty}\cap G(X)'$ for all $X\in \C$. Hence, $A_{\infty}\cap A^{\perp}$ is an ideal of $\bigcap_{X\in \C} A_{\infty}\cap G(X)'$. Hence the quotient $F(A)^G$ is well defined as a subalgebra of $F(A)$.
 
 Suppose $A$ is $\sigma$-unital and let $(e_n)_{n=1}^\infty$ be an approximate unit. Then $(e_n)_{n=1}^\infty$ represents the unit of $F(A)$.
 Since every object in $\Corr_0(A)$ is a non-degenerate bimodule, the subalgebra $F(A)^G$ contains the unit.
\end{proof} 
\par If $A$ is unital the annihilator $A_{\infty}\cap A^{\perp}$ is trivial and we simply denote the C$^*$-algebra of Proposition \ref{prop:fixedcentralseqalg} by $(A_{\infty}\cap A')^G$.
\par In general the intersection in \eqref{eqn:fixedcentralseqalg} is taken over the proper class of all $X \in \C$, so must be interpreted with a bit of care. 
Fortunately, there are a number of simplifications: 
As the category $\C$ is semisimple, the intersection can be taken over all simple objects $X \in \C$;
since $A_{\infty}\cap E'$ only depends on the isomorphism class of the Hilbert $A$-bimodule $E$, the intersection can be taken over a isomorphism classes of simple objects. 
In most examples of interest, the category $\C$ will only have countably many isomorphism classes of simple objects. 
If $\C$ is a unitary fusion category. The intersection can be taken to be finite.

We end this section, by showing that for an action of $\Hilb(\Gamma)$, we recover the fixed points of $F(A)$ under the induced action. 
\begin{example}
    Let $\Gamma$ be a discrete group and $\mathcal{C}=\Hilb(\Gamma)$ be acting by automorphisms on a C$^*$-algebra $A$. That is, the $\Hilb(\Gamma)$ action is given by a cocycle action $(\alpha,u)$ and sends $\mathbb{C}_\gamma\mapsto {}_{\alpha_\gamma} A$ for $\gamma\in \Gamma$. Then 
    $a\in \bigcap_{\gamma\in \Gamma}A_{\infty}\cap ({}_{\alpha_\gamma} A)'$ if and only if $(\alpha_\gamma(a)-a)b=0$ for all $b\in A$. Therefore, $F(A)^{(\alpha,u)}$ coincides with the fixed point algebra $F(A)^{\alpha}$ of those central sequences fixed by the automorphisms $\alpha_\gamma$ for all $\gamma\in \Gamma$ modulo $A_{\infty}\cap A^{\perp}$.
\end{example}
\section{A \texorpdfstring{$\C$}{C}-equivariant McDuff type result}
For a Hilbert $A$-bimodule $E$ and a Hilbert $B$-bimodule $H$ we denote their external tensor product by $E\otimes H$ which is a Hilbert $A\otimes B$-bimodule (see \cite{Hilbertmodules}), where $A\otimes B$ denotes the minimal tensor product of the C$^*$-algebras $A$ and $B$.
\begin{definition}\label{def:inducingactions}
Let $(G,J):\C\curvearrowright A$ be an action and $B$ be a C$^*$-algebra. We denote by $G\otimes \id_B:\C\rightarrow\Corr_0(A\otimes B)$ the $\mathbb{C}$-linear functor defined by
\begin{align}
    (G\otimes \id_B)(X)&:=G(X)\otimes B,\\
    (G\otimes \id_B)(f)&:=G(f)\otimes \id_B.
\end{align}
for $X,Y \in \C$ and $f\in \Hom(X,Y)$.
For each $X,Y\in \C$, we denote by 
\begin{equation}
    (J\otimes 1_B)_{X,Y}:(G\otimes \id_B)(Y)\boxtimes (G\otimes \id_B)(X)\rightarrow (G\otimes \id_B)(X\otimes Y) 
\end{equation}
the unitary natural isomorphism defined by
\begin{align}    
    (J\otimes 1_B)_{X,Y}((y\otimes b_1) \boxtimes(x\otimes b_2))&:=J_{X,Y}(y\boxtimes x)\otimes b_1b_2
\end{align}
for $x\in G(X),y\in G(Y)$ and $b_1,b_2\in B$. It is a routine check that this data defines an action $(G\otimes \id_B, J\otimes 1_B):\C \curvearrowright A\otimes B$.
\end{definition}
\begin{remark}\label{rmk:automorphismpic}
If $\alpha:\Gamma\curvearrowright A$ is a group action then it induces an action of $\Hilb(\Gamma)$ which we denote by $(\alpha,1_A)$. It is straightforward to see that $(\alpha\otimes \id_B, 1_A \otimes 1_B)$ is a $\Hilb(\Gamma)$ action on $A \otimes B$ induced by the $\Gamma$ action $\alpha\otimes \id_B:G\curvearrowright A\otimes B$. 
\end{remark}

Following \cite[Definition 1.3]{TOWI07}, a unital C$^*$-algebra $\D  \not\cong \mathbb{C}$ is \emph{strongly self-absorbing} if there is an isomorphism $\D \cong \D \otimes \D$ that is approximately unitary equivalent to the first factor embedding.\ In light of Remark \ref{rmk:automorphismpic} it makes sense to introduce the following generalisation of \cite[Definition 1.7]{WOU23}.
\begin{definition}\label{def:eqstab}
    Let $(G,J):\mathcal{C}\curvearrowright A$ be an action and $\mathcal{D}$ be a strongly self-absorbing C$^*$-algebra. We say $(G,J)$ is \emph{equivariantly $\mathcal{D}$-stable} if $(G,J)$ is cocycle conjugate to $(G\otimes \id_{\D},J\otimes 1_{\D})$, i.e.\ there exists a cocycle conjugacy between $(G,J)$ and $(G\otimes \id_{\D},J\otimes 1_{\D})$. 
\end{definition}
We may now state the main result of this paper.
\begin{theorem}\label{thm:eqMcDuff}
Let $A$ be a separable C$^*$-algebra, $\C$ be a unitary tensor category with countably many isomorphism classes of simple objects and $(G,J):\C\curvearrowright A$ be an action. Let $\D$ be a strongly self-absorbing C$^*$-algebra. The following are equivalent:
\begin{enumerate}[label=\textit{(\roman*)}]
    \item $(A,G,J)$ is equivariantly $\D$-stable i.e. $(A,G,J)$ is cocycle conjugate to $(A\otimes \D,G\otimes \id_{\D},J\otimes 1_{\D})$;
    \label{item:eqMcDuff1}
    \item $(A,G,J)$ is cocycle conjugate to $(A\otimes\D^{\otimes\infty},G\otimes \id_{\D^{\otimes \infty}},J\otimes 1_{\D^{\otimes\infty}})$;\label{item:eqMcDuff2}
    \item there exists a unital embedding  $\D\rightarrow F(A)^G$;\label{item:eqMcDuff3}
    \item the cocycle morphism $(\id_A\otimes 1_{\mathcal{D}},\{\id_{G(X)}\otimes 1_{\mathcal{D}}\}_{X\in \C})$ from $(A,G,J)$ to $(A\otimes\D, G\otimes \id_{\mathcal{D}},J\otimes 1_{\D})$
    is approximately unitarily equivalent to a cocycle conjugacy.\label{item:eqMcDuff4}
\end{enumerate}
\end{theorem}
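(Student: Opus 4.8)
The plan is to prove the cycle of implications \ref{item:eqMcDuff1}$\Rightarrow$\ref{item:eqMcDuff2}$\Rightarrow$\ref{item:eqMcDuff3}$\Rightarrow$\ref{item:eqMcDuff4}$\Rightarrow$\ref{item:eqMcDuff1}, with essentially all of the work going into \ref{item:eqMcDuff3}$\Rightarrow$\ref{item:eqMcDuff4}. Two of the four implications are nearly free. For \ref{item:eqMcDuff4}$\Rightarrow$\ref{item:eqMcDuff1}, a cocycle morphism that is approximately unitarily equivalent to a cocycle conjugacy in particular witnesses the existence of a cocycle conjugacy between $(A,G,J)$ and $(A\otimes\D,G\otimes\id_\D,J\otimes 1_\D)$. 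For \ref{item:eqMcDuff1}$\Rightarrow$\ref{item:eqMcDuff2}, fix a $^*$-isomorphism $\theta\colon\D\to\D^{\otimes\infty}$ (which exists because $\D$ is strongly self-absorbing, see \cite{TOWI07}); tensoring $\theta$ onto each correspondence, the pair $(\id_A\otimes\theta,\{\id_{G(X)}\otimes\theta\}_{X\in\C})$ is a cocycle conjugacy from $(A\otimes\D,G\otimes\id_\D,J\otimes 1_\D)$ onto $(A\otimes\D^{\otimes\infty},G\otimes\id_{\D^{\otimes\infty}},J\otimes 1_{\D^{\otimes\infty}})$, and composing it with the cocycle conjugacy supplied by \ref{item:eqMcDuff1} gives \ref{item:eqMcDuff2}.

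For \ref{item:eqMcDuff2}$\Rightarrow$\ref{item:eqMcDuff3}, I would first record that a cocycle conjugacy $(\varphi,\{h^X\}_{X\in\C})\colon(A,G,J)\to(B,H,I)$ induces a $^*$-isomorphism $F(A)^G\cong F(B)^H$: the $^*$-isomorphism $\varphi$ induces a $^*$-isomorphism $A_\infty\cong B_\infty$ carrying $A_\infty\cap A'$ onto $B_\infty\cap B'$ and $A_\infty\cap A^\perp$ onto $B_\infty\cap B^\perp$, while the bijective maps $h^X$, via axioms \ref{item:linearmap1}--\ref{item:linearmap4}, match the defining relation of $A_\infty\cap G(X)'$ with that of $B_\infty\cap H(X)'$ for each simple $X$. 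It therefore remains to produce a unital embedding $\D\to F(A\otimes\D^{\otimes\infty})^{G\otimes\id}$, for which I would use the ``shift into a fresh tensor leg'' map: fixing an approximate unit $(e_k)_{k\in\N}$ for $A$, send $d\in\D$ to the class in $F(A\otimes\D^{\otimes\infty})^{G\otimes\id}$ of the sequence whose $n$-th entry is $e_n\otimes 1^{\otimes(n-1)}\otimes d\otimes 1^{\otimes\infty}$. Since elements of $A\otimes\D^{\otimes\infty}$ supported on finitely many tensor legs are dense, such a sequence is central; and because the left and right $A\otimes\D^{\otimes\infty}$-actions on $G(X)\otimes\D^{\otimes\infty}$ affect the shared $\D^{\otimes\infty}$-leg only by multiplication, the same density argument places the class in $A_\infty\cap(G\otimes\id)(X)'$ for every $X$. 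As $(e_n\otimes 1^{\otimes\infty})_{n\in\N}$ is an approximate unit of $A\otimes\D^{\otimes\infty}$, the embedding is unital.

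The heart of the argument is \ref{item:eqMcDuff3}$\Rightarrow$\ref{item:eqMcDuff4}. Write $\iota=(\id_A\otimes 1_\D,\{\id_{G(X)}\otimes 1_\D\}_{X\in\C})$ for the standard embedding cocycle morphism. Starting from a unital embedding $\Phi\colon\D\to F(A)^G$, I would lift it --- using that $\D$ is separable and nuclear, being strongly self-absorbing --- to a sequence of contractive completely positive maps $\sigma_n\colon\D\to A$ which is asymptotically multiplicative, asymptotically central in $A$, satisfies $\sigma_n(1_\D)a\to a$ for all $a\in A$, and is asymptotically equivariant in the sense that $\|\sigma_n(d)\rhd\xi-\xi\lhd\sigma_n(d)\|\to 0$ for all $d\in\D$ and all $\xi\in G(X)$; a diagonal argument over a countable dense subset of $\D$, over representatives of the countably many isomorphism classes of simple objects of $\C$, and over countable dense subsets of the corresponding correspondences produces a single such sequence. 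These maps yield cocycle morphisms $\kappa_n\colon(A\otimes\D,G\otimes\id_\D,J\otimes 1_\D)\to(A,G,J)$ --- honest after the standard passage to a sequential completion, or up to vanishing errors --- by setting $\varphi_n(a\otimes d)=a\sigma_n(d)$ and $h_n^X(\xi\otimes d)=\xi\lhd\sigma_n(d)$, where asymptotic centrality, multiplicativity and equivariance of $\sigma_n$ are precisely what is needed to verify axioms \ref{item:linearmap1}--\ref{item:linearmap5} asymptotically; one checks directly that $\kappa_n\circ\iota$ tends to the identity cocycle morphism of $(A,G,J)$. For the reverse comparison, a second use of strong self-absorption ($\D\cong\D\otimes\D$, allowing the $\sigma_n$ to be chosen with approximately commuting ranges and hence to possess approximate one-sided inverses on finite sets) produces unitaries $u_n\in M(A\otimes\D)$ implementing an approximate unitary equivalence between $\iota\circ\kappa_n$ and the identity of $(A\otimes\D,G\otimes\id_\D,J\otimes 1_\D)$, compatibly on every bimodule $G(X)\otimes\D$. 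Feeding this two-sided approximate intertwining into the intertwining machinery for cocycle morphisms of \cite{GINE23} --- applicable precisely because $A$ is separable and $\C$ has countably many isomorphism classes of simple objects --- yields a cocycle conjugacy from $(A,G,J)$ to $(A\otimes\D,G\otimes\id_\D,J\otimes 1_\D)$ that is approximately unitarily equivalent to $\iota$, which is \ref{item:eqMcDuff4}.

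The step I expect to be the main obstacle is the second half of \ref{item:eqMcDuff3}$\Rightarrow$\ref{item:eqMcDuff4}: turning the approximate data $(\sigma_n)$ into \emph{honest} cocycle morphisms $\kappa_n$ in a form digestible by the intertwining machinery of \cite{GINE23}, which demands careful tracking of the bimodule maps $h_n^X$ because the unitary corrections appearing in the intertwining must act on all the correspondences $G(X)$ simultaneously; and establishing the reverse half of the intertwining equivariantly, adapting the $\D\cong\D\otimes\D$ argument of \cite{TOWI07}. By comparison, functoriality of $F(-)^G$ under cocycle conjugacy, the shift embedding, and the lifting of $\Phi$ are routine equivariant adaptations of \cite{TOWI07} and of its group-equivariant counterpart \cite{SZI}, the countably-many-simple-objects bookkeeping being absorbed into diagonal sequences.
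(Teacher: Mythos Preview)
Your cycle of implications and your treatments of \ref{item:eqMcDuff1}$\Rightarrow$\ref{item:eqMcDuff2}, \ref{item:eqMcDuff2}$\Rightarrow$\ref{item:eqMcDuff3}, and \ref{item:eqMcDuff4}$\Rightarrow$\ref{item:eqMcDuff1} match the paper's proof essentially verbatim, including the observation that a cocycle conjugacy induces an isomorphism $F(A)^G\cong F(B)^H$ and the ``shift into a fresh tensor leg'' embedding.

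For \ref{item:eqMcDuff3}$\Rightarrow$\ref{item:eqMcDuff4} you take a genuinely different route. The paper does \emph{not} build approximate inverse cocycle morphisms $\kappa_n$ nor run a two-sided intertwining. Instead it works directly with the unital $^*$-homomorphism $\varphi\colon\D\otimes\D\to B$, where $B$ is a suitable subalgebra of $F(A\otimes 1_\D,(A\otimes\D)_\infty)$, obtained by tensoring the given embedding $\psi\colon\D\to F(A)^G$ with the identity on the outer copy of $\D$. The approximately inner half-flip furnishes a unitary $v\in\D\otimes\D$ with $v^*(1\otimes d)v\approx d\otimes 1$; setting $u=\varphi(v)$, a short computation shows $u^*\rhd(\xi\otimes d)\lhd u$ is $\varepsilon$-close to $G(X)_\infty\otimes 1_\D$ for all contractions $\xi\in G(X)$ and $d$ in a prescribed finite set. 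The paper then uses that $v$ may be chosen homotopic to $1$ (by $K_1$-injectivity of strongly self-absorbing algebras, \cite{WIN11}) to lift $u$ to an honest unitary in $(M(A)\otimes\D)_\infty$, picks a representative $w_n$, and feeds it directly into \cite[Theorem~6.2]{GINE23}, which already packages the one-sided intertwining. This is shorter and sidesteps entirely the issue you flag as your main obstacle: no $\kappa_n$ are ever constructed, so there is no need to promote approximate cocycle morphisms to honest ones.

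Your approach can be made to work, but the step you yourself identify as the obstacle is also where your outline is vaguest. The phrase ``approximately commuting ranges and hence approximate one-sided inverses'' does not by itself produce the unitaries $u_n$ witnessing $\iota\circ\kappa_n\approx_u\id$; what actually does is the half-flip of $\D$, applied in essentially the same way as in the paper's argument, and you would additionally need the $K_1$-injectivity/homotopy step to lift those unitaries from the quotient to $M(A\otimes\D)$. Once you write that out, your argument and the paper's converge: the difference is that the paper has \cite[Theorem~6.2]{GINE23} absorbing the intertwining, so the half-flip computation is the entire content of \ref{item:eqMcDuff3}$\Rightarrow$\ref{item:eqMcDuff4}, whereas you propose to redo part of that intertwining by hand.
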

\begin{proof}
    $\ref{item:eqMcDuff1}\Rightarrow\ref{item:eqMcDuff2}$: Let $(\varphi,\{h^X\}_{X\in \C})$ be a cocycle conjugacy from $(A,G,J)$ to $(A\otimes\D,G\otimes\id_{\D},J\otimes 1_{\D})$. Denote by $\psi:\D\rightarrow \D^{\otimes\infty}$ an isomorphism. The cocycle morphism 
    $(\id_A\otimes \psi,\{\id_{G(X)}\otimes\psi\}_{X\in \C})$ is invertible as $\psi$ is invertible. Therefore the composition 
    $$(\id_A\otimes \psi,\{\id_{G(X)}\otimes\psi\}_{X\in \C})\circ (\varphi,\{h^X\}_{X\in \C})$$ 
    is the desired cocycle conjugacy.
    
    $\ref{item:eqMcDuff2}\Rightarrow\ref{item:eqMcDuff3}$: If $(\varphi,\{h^X\}_{X\in \C})$ is a cocycle conjugacy between two $\C$-actions $G$ and $H$, then $\varphi$ induces an isomorphism from $F(A)^G$ to $F(A)^H$. Therefore, it suffices to show that there is a unital embedding from $\D$ into $F(A\otimes \D^{\otimes\infty})^{G\otimes \id_{\D^{\otimes\infty}}}$.
     \par Let $(h_n)_{n\in \N}$ be a sequential approximate unit for $A$ and denote by $\rho_n:\D\rightarrow \D^{\otimes \infty}$ the unital $^*$-homomorphism taking $\D$ to the $n$-th tensor entry of $\D^{\otimes \infty}$ i.e.\  $\rho_n(d)$ is the image of $1_{\D} \otimes \ldots \otimes 1_{\D} \otimes d\in \D^{\otimes n}$ under the canonical inclusion $\D^{\otimes n}\subset \D^{\otimes \infty}$. 
    Now, the mapping
    \begin{align*}
    \phi:\D&\rightarrow F(A\otimes \D^{\otimes\infty})\\
    d&\mapsto (h_n\otimes \rho_n(d))
    \end{align*}
     is a unital embedding into $F(A\otimes\D^{\otimes\infty})^{G\otimes\id_{\D^{\otimes\infty}}}$, as required.
     \par$\ref{item:eqMcDuff4}\Rightarrow\ref{item:eqMcDuff1}$: Is immediate.
    
    $\ref{item:eqMcDuff3}\Rightarrow\ref{item:eqMcDuff4}$: Let $\psi:\mathcal{D}\rightarrow F(A)^G$ be a unital embedding. We will show that the cocycle morphism $(\id_A\otimes 1_{\mathcal{D}},\{\id_{G(X)}\otimes 1_{\mathcal{D}}\}_{X\in \C})$ satisfies the conditions of \cite[Theorem 6.2]{GINE23} which \ref{item:eqMcDuff4} follows.   
    Define the unital $^*$-homomorphism
    \begin{equation}
        \varphi:\mathcal{D}\otimes \mathcal{D}\rightarrow \frac{\bigcap_{X\in\Irr(\C)}(A\otimes\D)_{\infty}\cap (G(X)\otimes 1_{\mathcal{D}})'}{(A\otimes\D)_{\infty}\cap (A\otimes 1_{\D})^{\perp}}
    \end{equation}
    by $\varphi(d\otimes d')=\psi(d)\otimes d'$.\footnote{We view $\psi(d)\otimes d'$ as an element of the codomain by lifting $\psi(d)$ to an element in $\bigcap_{X\in \Irr(\C)}A\cap G(X)'$, tensoring on $d'$, and then descending to the quotient. It is easy to see that the result is independent of the choice of lift.} 
    For the remainder of the proof we denote the codomain C$^*$-algebra of $\varphi$ by $B$.  That $B$ is indeed a C$^*$-algebra follows in the same way as Proposition \ref{prop:fixedcentralseqalg}, note also that it is a subalgebra of $F(A\otimes 1_{\D},(A\otimes\D)_{\infty})$.\
    \par Firstly, for any $b\in F(A\otimes 1_{\D},(A\otimes\D)_{\infty})$ and $\xi\in G(X)\otimes \D$ with $X\in \Irr(\C)$ we denote by  $b\rhd \xi $ and $\xi\lhd b$ the elements of $(G(X)\otimes \D)_{\infty}$ defined by lifting $b$ to an element of $(A\otimes \D)_\infty \cap (A\otimes 1_{\D})'$ and applying the left and right action by this lift to $\xi$ respectively. That this is independent of the lift follows as $G(X)$ is non-degenerate. 
    Moreover, for $d\in \D$ and $\xi\in G(X)$ one has that
    \begin{equation}\label{1}\varphi(1_{\D}\otimes d)\rhd\xi\otimes 1_{\D}=(\psi(1_{\D})\otimes d)\rhd (\xi\otimes 1_{\D})=\xi\otimes d
    \end{equation}
     as $\psi$ is unital and $G(X)$ is non-degenerate. 
    Similarly, 
    \begin{equation}\label{2}\varphi(d\otimes 1_{\D})\rhd (\xi\otimes 1_{\D})=(\psi(d)\otimes 1_{\D})\rhd (\xi\otimes 1_{\D})\subset G(X)_{\infty}\otimes 1_{\D}.
    \end{equation}
\par Now, let $\mathcal{F}_1\subset \D$ be a finite subset and $\varepsilon>0$. As $\D$ has an approximately inner half flip, there is a unitary $v\in U(\D\otimes\D)$ such that
    \begin{equation}
        \|v^*(1_{\D}\otimes d)v-d\otimes 1_{\D}\|< \varepsilon, \forall d \in \mathcal{F}_1.
    \end{equation}
    Set $u=\varphi(v)$. Let $d\in \mathcal{F}_1$, $X\in \Irr(\C)$ and  $\xi \in G(X)$ with $\|\xi\|\leq 1$. Combining \eqref{1}, \eqref{2} and that $(\xi \otimes 1_{\D})\lhd \varphi(x)=\varphi(x)\rhd (\xi\otimes 1_{\D})$ for all $x\in \D\otimes\D$, we have 
    \begin{equation}
        \begin{split}
     \dist(&G(X)_{\infty}\otimes 1_{\D}, u^*\rhd (\xi\otimes d)\lhd u) \\
      &=\dist(G(X)_{\infty}\otimes 1_{\D},\varphi(v^*)\rhd (\xi \otimes d)\lhd \varphi(v))\\
      &=\dist(G(X)_{\infty}\otimes 1_{\D},\varphi(v^*(1_{\D}\otimes d))\rhd (\xi \otimes 1_{\D})\lhd\varphi(v))\\
      &=\dist(G(X)_{\infty}\otimes 1_{\D},\varphi(v^*(1_{\D}\otimes d)v)\rhd (\xi \otimes 1_{\D}))\\
      &<\dist(G(X)_{\infty}\otimes 1_{\D},\varphi(d\otimes 1_{\D})\rhd (\xi \otimes 1_{\D}))+\varepsilon\\
      &=\varepsilon,
        \end{split}
    \end{equation}
    By \cite[Proposition 1.13]{TOWI07}, we may assume that the unitary $v \in \D \otimes \D$ is homotopic to the identity.\footnote{The $K_1$-injectivity hypothesis of \cite[Proposition 1.13]{TOWI07} is now known to be automatic (see \cite[Remark 3.3]{WIN11}).} Hence,  $u=\varphi(v)$ is also homotopic to the identity. 
    By definition, there is a unital inclusion
    \begin{equation}
        B\subset \frac{(M(A)\otimes\D)_{\infty}}{(A\otimes\D_{\infty})\cap (A\otimes 1_{\D})^{\perp}}.
    \end{equation}
    Since $u$ is homotopic to to the identity, we may lift $u$ to a unitary $w$ in $(M(A)\otimes\D)_{\infty}$. As $w$ lifts $u$, it follows that 
    $$w\rhd (\xi \otimes 1_{\D})=(\xi\otimes 1_{\D})\lhd w$$ and $$\dist(G(X)_{\infty}\otimes 1_{\D},w^*\rhd(\xi \otimes d)\lhd w)<\varepsilon$$ for all $d\in \mathcal{F}_1$ and $\xi\in G(X)$ with $\|\xi\|\leq 1$.\footnote{Here we view $G(X)$ as a Hilbert $M(A)$-module by \cite[Lemma 1.11]{GINE23}. The actions of $w$ on $G(X)\otimes 1_{\D}$ are independent of the lift of $v$ as $G(X)$ is non-degenerate.} 
    Now, pick a sequence of unitaries $(w_n)_{n=0}^\infty$ with $w_n\in M(A)\otimes \D$ that represents $w$. 
    For any finite sets $K\subset \Irr(\C)$ containing $1_{\C}$, $\mathcal{F}_1\subset \D$, $\G_{X}\subset G(X)$ one may choose $n$ large enough such that 
    \begin{align}
    \|w_n& \rhd (\xi \otimes 1_{\D})-(\xi\otimes 1_{\D})\lhd w_n\|\leq \varepsilon,\ \text{for all}\ \xi \in \G_X, X\in K,
    \end{align}
    \begin{align}
     \dist&(G(X)\otimes 1_{\D},w_n^*\rhd (\xi\otimes d)\lhd w_n)\leq \varepsilon,\ \text{for all}\ \xi\in \G_X, \\ 
     \quad& d\in \mathcal{F}_1, \text{and}\ X\in K. \notag
    \end{align}
    Therefore, the cocycle morphism consisting of the $^*$-homomorphism $\id_A\otimes 1_{\D}$ with family of linear maps $h^X=\id_{G(X)}\otimes 1_{\D}$ for $X\in \C$ satisfies the conditions of \cite[Theorem 6.2]{GINE23} and \ref{item:eqMcDuff4} follows.
\end{proof}

\section{Application to \texorpdfstring{$\D$}{D}-stability for stationary AF-actions }\label{sec:example}

In this section we apply our main result to obtain a useful, sufficient condition for large class of AF-actions to be $\D$-stable, where $\D$ is a strongly self-absorbing algebra. First, we introduce some notation and recall the enriched Bratteli diagram formalism from \cite[Definition 4.10]{CHROJO22}.

If $\C$ is a unitary fusion category, then an AF-action is a action on an AF-algebra which is an inductive limit of actions on finite dimensional C$^*$-algebras.\ These can be described by choosing, for each $n\in \mathbbm{N}$, a unitary finitely semisimple (left) $\mathcal{C}$-module category $\mathcal{M}_{n}$, a generating object $m_{n}\in \mathcal{M}_{n}$, and a $\C$-module functor $F_{n}: \mathcal{M}_{n}\rightarrow \mathcal{M}_{n+1}$ such that $F_{n}(m_{n})\cong m_{n+1}$. Note that $m_{n}$ is determined up to isomorphism by the choices of $\mathcal{M}_{n}, F_{n}$ and $m_{0}$. This data is called an \textit{enriched Bratelli diagram} for an AF-action and completely determines an AF-action up to equivalence (though it is not unique).

\begin{definition}
    An AF-action of $\C$ is \textit{stationary} if it has an enriched Bratteli diagram with $\mathcal{M}_{n}=\mathcal{M}_{0}$ and $F_{n}=F_{0}$ for all $n$.
\end{definition}

These are the type of AF-actions that typically arise from reconstruction results in subfactor theory \cite{MR1473221}. 

Given a stationary AF-action of $\C$ with module category $\mathcal{M}$ and module functor $F:\mathcal{M}\rightarrow \mathcal{M}$, we can view $\mathcal{M}$ as a $\mathcal{C}-\C^{*}_{\mathcal{M}}$ bimodule category, where $\C^{*}_{\mathcal{M}}:= \text{End}_{\C}(\mathcal{M})^{op}$ is the (unitary) dual category to $\C$\ \cite[Chapter 7]{MR3242743}. Note that in general $\C^{*}_{\mathcal{M}}$ will be an indecomposable multi-fusion category, which is fusion if and only if $\mathcal{M}$ itself is indecomposable \cite[Section 7.12]{MR3242743}.\ Then we can associate to $F$ an object $Y\in \C^{*}_{\mathcal{M}}$, such that the functor $F$ is equivalent to $\cdot \triangleleft Y  $, and the module functor structure on $F$ is given by the bimodule associator associated to $Y$. For convenience, by MacLane's coherence theorem for 2-categories (e.g. \cite[Theorem 8.4.1]{MR4261588}) we can assume that $\C^{*}_{\mathcal{M}}$ and $\mathcal{C}$ are strict tensor categories and that $\mathcal{M}$ is a strict $\mathcal{C}-\C^{*}_{\mathcal{M}}$ bimodule category.

We now unpack the definition of the inductive limit action from the enriched Bratteli diagram. We set $A_{n}:=\text{End}_{\mathcal{M}}( m_{0}\triangleleft Y^{\otimes n})$ for each $n \in \N$, which are finite dimensional C$^*$-algebras.

Next we define an action of $\C$ on $A_n$ for each $n \in \N$. The functor $G_{n}:\C\rightarrow \Corr(A_{n})$ is defined by 
\begin{equation}
    G_{n}(X):=\text{Hom}_{\mathcal{M}}( m_{0}\triangleleft Y^{\otimes n}, X\triangleright m_{0}\triangleleft Y^{\otimes n}),
\end{equation}
where the $A_n$-bimodule structure is given by $x\cdot f\cdot y:= (1_{X}\triangleright x)\circ f\circ y$ and the right $A_{n}$-valued inner product is given by $\langle f , g\rangle_{A_n}:=f^{*}\circ g$. The tensorator $J^{X_1,X_2}_n:G_n(X_2) \boxtimes G_n(X_1) \rightarrow G_n(X_1 \otimes X_2)$ is defined on elementary tensors via the composition 
\begin{equation}
     m_{0} \triangleleft Y^{\otimes n} \xrightarrow{f_1}  
      X_1 \triangleright m_{0} \triangleleft  Y^{\otimes n}
      \xrightarrow{1_{X_1} \triangleright f_2 } X_1 \triangleright X_2 \triangleright m_{0} \triangleleft  Y^{\otimes n}, 
\end{equation}
for $f_1 \in G_n(X_1)$ and $f_2 \in G_n(X_2)$, as by strictness $X_1 \triangleright X_2 \triangleright m_{0} \triangleleft  Y^{\otimes n} = X_1 \otimes X_2 \triangleright m_{0} \triangleleft  Y^{\otimes n}$.

We define connecting maps $\varphi_n:A_n \rightarrow A_{n+1}$ via $x\mapsto x\triangleleft 1_{Y}$, and connecting cocycle morphisms $(\varphi_n,\{j_n^X\}_{X\in \C}):(G_n,A_n)\rightarrow (G_{n+1},A_{n+1})$ where $j^{X}_{n}(f):=f\triangleleft 1_{Y}$. 
The result is an AF action $(G,J)$ of $\C$ on the unital AF algebra  $A = \displaystyle\lim_{\to} (A_{n},\varphi_n)$.

Now, before we state our main result of this section, for any (strict) unitary tensor category $\mathcal{E}$ and any object $Y\in \mathcal{E}$, we define the AF-algebra $B^{Y}$ as the inductive limit of $B^{Y}_{n}:=\text{End}_{\mathcal{E}}(Y^{\otimes n})$ with connecting maps $x\mapsto x \otimes 1_{Y}$.

\begin{theorem}\label{thm:BYintrocentralsequence}
Let $\C$ be a unitary fusion category and $\mathcal{M}$ a unitary indecomposable $\mathcal{C}$-module category with generating object $m_0\in \mathcal{M}$. Let $Y \in \C^{*}_{\mathcal{M}}$, and consider the stationary AF action $G:\C\rightarrow \Corr(A)$ defined by $(m_0,\mathcal{M},Y)$. Then $(A_{\infty}\cap A^{\prime})^{G}$ contains $B^{Y}=\lim \operatorname{End}_{\C^{*}_{\mathcal{M}}}(Y^{\otimes n})$ as a unital subalgebra. In particular, if $B^{Y}$ contains a strongly self-absorbing unital subalgebra $\mathcal{D}$, then the action $G$ is $\D$-stable.
\end{theorem}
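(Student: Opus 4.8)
The strategy is to produce an explicit unital $\ast$-homomorphism $B^Y \to (A_\infty \cap A')^G$ and then invoke Theorem~\ref{thm:eqMcDuff}. First I would fix, for each $n$, the inclusion $B^Y_n = \operatorname{End}_{\C^*_{\mathcal M}}(Y^{\otimes n})$ into $A_n = \operatorname{End}_{\mathcal M}(m_0 \triangleleft Y^{\otimes n})$ given by the module action: an endomorphism $t \colon Y^{\otimes n} \to Y^{\otimes n}$ in $\C^*_{\mathcal M}$ produces $1_{m_0} \triangleleft t \in A_n$. Since the $\C^*_{\mathcal M}$-action on $\mathcal M$ commutes with the $\C$-action on the left, these elements lie in the relative commutant of the image of $G_n$: concretely, for $f \in G_n(X) = \operatorname{Hom}_{\mathcal M}(m_0 \triangleleft Y^{\otimes n}, X \triangleright m_0 \triangleleft Y^{\otimes n})$, one checks $(1_{m_0}\triangleleft t)\cdot f = f \cdot (1_{m_0}\triangleleft t)$ directly from the bimodule-category axioms (interchange of the $X\triangleright(-)$ and $(-)\triangleleft t$ operations). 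The connecting maps are compatible: $1_{m_0}\triangleleft (t \otimes 1_Y) = (1_{m_0}\triangleleft t)\triangleleft 1_Y = \varphi_n(1_{m_0}\triangleleft t)$, matching the connecting map of $B^Y$, so we get an induced unital embedding $B^Y \hookrightarrow A$. Composing with the constant-sequence embedding $A \hookrightarrow A_\infty$ lands us in $A_\infty \cap A'$ (in fact in each $A_\infty \cap G(X)'$), hence in $(A_\infty \cap A')^G$. This gives the first assertion.

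\textbf{Why the image is central, in more detail.} The key categorical fact is that for $m_0 \triangleleft Y^{\otimes n}$ viewed inside the strict $\C$-$\C^*_{\mathcal M}$ bimodule category $\mathcal M$, the left $\C$-action and the right $Y^{\otimes n}$-action are strictly interchangeable; concretely $X \triangleright (m_0 \triangleleft Y^{\otimes n}) = (X\triangleright m_0)\triangleleft Y^{\otimes n}$ and $1_X \triangleright (1_{m_0}\triangleleft t) = 1_{X\triangleright m_0}\triangleleft t$ as morphisms. Then for $f \in G_n(X)$, $(1_{m_0}\triangleleft t)\cdot f = (1_X \triangleright(1_{m_0}\triangleleft t))\circ f = (1_{X \triangleright m_0}\triangleleft t)\circ f$ and $f\cdot(1_{m_0}\triangleleft t) = f \circ (1_{m_0}\triangleleft t)$, and naturality of the associator/module structure of $f$ (it is a morphism in $\mathcal M$ that commutes with $\triangleleft t$) gives equality. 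Since this holds at each finite stage and the embeddings are compatible with the connecting maps, it passes to the limit: the image of $B^Y$ lies in $\bigcap_{X\in\C} A_\infty \cap G(X)'$. As $A$ is unital the annihilator vanishes, so we land in $(A_\infty\cap A')^G$, and unitality is clear since $1_{B^Y_n}\mapsto 1_{A_n}$.

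\textbf{Conclusion via the main theorem.} For the second assertion, suppose $\D \subseteq B^Y$ is a strongly self-absorbing unital subalgebra. Composing $\D \hookrightarrow B^Y \hookrightarrow (A_\infty\cap A')^G = F(A)^G$ (the identification holding since $A$ is unital) yields a unital embedding $\D \to F(A)^G$, which is exactly condition \ref{item:eqMcDuff3} of Theorem~\ref{thm:eqMcDuff}. Hence $(A,G,J)$ is equivariantly $\D$-stable, i.e.\ $G$ is $\D$-stable. I expect the only real work to be the categorical bookkeeping in the second paragraph — verifying that the $\C^*_{\mathcal M}$-endomorphisms of $Y^{\otimes n}$ genuinely commute with all of $G_n(X)$ and that everything is compatible with the connecting maps — which, granted strictness of $\mathcal M$ as a bimodule category, reduces to an unwinding of definitions rather than anything deep. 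Once that embedding is in hand, the rest is an immediate appeal to Theorem~\ref{thm:eqMcDuff}.
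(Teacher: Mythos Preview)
There is a genuine gap: your constant-sequence embedding $B^{Y}\hookrightarrow A\hookrightarrow A_{\infty}$ does \emph{not} land in $A_{\infty}\cap A'$. Your key claim is that for $t\in B^{Y}_{n}=\End_{\C^{*}_{\mathcal{M}}}(Y^{\otimes n})$ and $f\in G_{n}(X)$ one has $(1_{m_{0}}\triangleleft t)\cdot f=f\cdot(1_{m_{0}}\triangleleft t)$ ``by naturality''. But the natural transformation $p\mapsto 1_{p}\triangleleft t$ is natural only with respect to morphisms of the form $g\triangleleft 1_{Y^{\otimes n}}$ for $g\colon p\to q$ in $\mathcal{M}$; an arbitrary $f\colon m_{0}\triangleleft Y^{\otimes n}\to X\triangleright m_{0}\triangleleft Y^{\otimes n}$ in $\mathcal{M}$ is \emph{not} of this form, so there is no reason it intertwines $\triangleleft t$. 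Already for $X=1_{\C}$ your claim would force $1_{m_{0}}\triangleleft t$ to be central in $A_{n}=\End_{\mathcal{M}}(m_{0}\triangleleft Y^{\otimes n})$, which is false: take $\C=\mathcal{M}=\Hilb$, $m_{0}=\mathbb{C}$, $Y=\mathbb{C}^{2}$, so $A_{n}=B^{Y}_{n}=M_{2^{n}}$ with the map $t\mapsto 1_{m_{0}}\triangleleft t$ the identity; nothing but scalars is central.

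What actually works is a \emph{shift} (diagonal) construction rather than a constant embedding. For $x\in B^{Y}_{m}$, define the sequence $x_{n}:=1_{m_{0}\triangleleft Y^{\otimes n}}\triangleleft x\in A_{n+m}$; here $x$ acts on the last $m$ tensor factors of $Y^{\otimes(n+m)}$, while the image of $G_{n}(X)$ under the connecting maps is $f\triangleleft 1_{Y^{\otimes m}}$ and only touches the first $n$ factors. Bifunctoriality of $\triangleleft$ then gives the exact commutation $x_{n}\cdot(f\triangleleft 1_{Y^{\otimes m}})=f\triangleleft x=(f\triangleleft 1_{Y^{\otimes m}})\cdot x_{n}$, so $(x_{n})_{n}$ defines an element of $(A_{\infty}\cap A')^{G}$. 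Compatibility with the connecting maps of $B^{Y}$ and unitality are then immediate, and your final paragraph invoking Theorem~\ref{thm:eqMcDuff} goes through unchanged.
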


\begin{proof}
Let $x\in B^{Y}_{m}=\text{End}_{\C^{*}_{\mathcal{M}}}(Y^{\otimes m})$. Define $x_{n}:= 1_{m_{0} \triangleleft Y^{\otimes n}} \triangleleft x \in \text{End}(m_0  \triangleleft Y^{\otimes (n+m)}):=A_{n+m}$. 
By construction, $x_n$ commutes with the image of $G_{n}(X)$ under $j_n^{X}\circ\cdots\circ j^X_{n+m-1}$. 
Hence, the the map $x \mapsto (x_n)_{n=1}^\infty$ induces a unital injective $^*$-homomorphism $B^{Y}_{m}\rightarrow (A_{\infty}\cap A^{\prime})^{G}$ for each $m \in \N$. As these maps are compatible with the connecting maps $x\mapsto x \otimes 1_{Y}$, we obtain a unital injective $^*$-homomorphism $B^{Y}\rightarrow (A_{\infty}\cap A^{\prime})^{G}$.
\end{proof}

We call a stationary AF-action satisfying the hypotheses of the above theorem an \textit{indecomposable stationary AF-action}. Recall an object $Y$ in a fusion category $\mathcal{E}$ is called a \textit{strong tensor generator} of $\mathcal{E}$ if there exists an $n \in \N$ such that every  simple object of $\mathcal{E}$ is isomorphic to a summand of $Y^{\otimes n}$.  

\begin{corollary}\label{cor:stationaryareZstable} Let $G$ be the indecomposable stationary AF-action of the unitary fusion category $\mathcal{C}$ on the unital AF-algebra $A$ determined by the triple $(m_0,\mathcal{M},Y)$. If $Y$ is a strong tensor generator for $\C^{*}_{\mathcal{M}}$, then $G$ is $\mathcal{Z}$-stable.
\end{corollary}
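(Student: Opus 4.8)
The plan is to combine Theorem~\ref{thm:BYintrocentralsequence} with the structure theory of the AF-algebra $B^Y$ and the known fact that the Jiang--Su algebra $\Z$ embeds unitally into any unital C$^*$-algebra with sufficiently rich finite-dimensional structure. By Theorem~\ref{thm:BYintrocentralsequence}, it suffices to exhibit a unital embedding of $\Z$ into $B^Y = \lim_{\to}\bigl(\End_{\C^*_{\mathcal{M}}}(Y^{\otimes n}), \, x \mapsto x \otimes 1_Y\bigr)$; since $\Z$ is strongly self-absorbing, the ``in particular'' clause of Theorem~\ref{thm:BYintrocentralsequence} then gives $\Z$-stability of $G$. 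So the entire task reduces to understanding the AF-algebra $B^Y$ well enough to find $\Z$ inside it.

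First I would identify $B^Y$ as a unital AF-algebra and compute (or estimate) its dimension-growth / $K_0$-data. Each $B^Y_n = \End_{\C^*_{\mathcal{M}}}(Y^{\otimes n})$ is finite-dimensional, and its simple summands are indexed by the simple subobjects of $Y^{\otimes n}$ in the multi-fusion category $\C^*_{\mathcal{M}}$, with the summand corresponding to a simple $Z$ having matrix size equal to the multiplicity $\dim\Hom(Z, Y^{\otimes n})$. The connecting map $x \mapsto x \otimes 1_Y$ corresponds, on the level of Bratteli diagrams, to the fusion graph of tensoring with $Y$: the multiplicity of an edge from the $Z$-summand at level $n$ to the $W$-summand at level $n+1$ is $\dim\Hom(W, Z \otimes Y)$. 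The key input is the hypothesis that $Y$ is a strong tensor generator of $\C^*_{\mathcal{M}}$: there is a fixed $N$ with every simple object of $\C^*_{\mathcal{M}}$ a summand of $Y^{\otimes N}$. This makes the Bratteli diagram of $B^Y$ ``$N$-step irreducible'' in the appropriate sense — from any simple at level $n$ one reaches every simple at level $n+N$ — and it forces the matrix sizes to grow: the total dimension $\dim B^Y_n$ grows at rate $\sim (\dim Y)^{2n}$ (via the Frobenius--Perron / $\dim$ functor on $\C^*_{\mathcal{M}}$), while each individual matrix block size grows proportionally, so no block stabilizes.

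Having set this up, the cleanest route is to invoke a known embedding criterion for $\Z$: for instance, $\Z$ embeds unitally into any unital AF-algebra whose Bratteli diagram has the property that for every $n$ and every block $B$ at level $n$ there is some later level $m$ and a block $B'$ at level $m$ such that $B'$ receives at least two edges from blocks at level $n$ (more precisely, one uses that $\Z$ embeds unitally into $M_{p^\infty} \otimes M_{q^\infty}$-like building blocks, and that suitable AF-algebras with ``enough supernatural multiplicity'' contain $\Z$; compare the Jiang--Su construction and \cite{Jiang-Su}, or simply that a unital AF-algebra contains $\Z$ unitally iff it contains unital copies of dimension-drop-like algebras, which follows from non-stabilizing, sufficiently divisible dimension data). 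Concretely: because $Y$ is a strong tensor generator, one can find within $B^Y$ a unital sub-AF-algebra that is a UHF algebra of infinite type (a unital copy of some $M_{n^\infty}$) — indeed, following a single simple object through the $N$-step-mixing diagram produces matrix blocks whose sizes are multiplied by fixed integers $\ge 2$ at each of infinitely many stages, and any infinite-type UHF algebra absorbs $\Z$, hence contains $\Z$ unitally. Then $\Z \hookrightarrow M_{n^\infty} \hookrightarrow B^Y$ unitally, and we are done.

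The main obstacle is the middle step: rigorously extracting a unital infinite-type UHF subalgebra (or some other $\Z$-absorbing AF-subalgebra) from $B^Y$ using only the strong-tensor-generation hypothesis. One must argue that the multiplicities $\dim\Hom(W, Z\otimes Y)$ along a suitably chosen path are eventually always $\ge 2$ in a way that accumulates to a supernatural number divisible by $p^\infty$ for some prime $p$; this uses that $\dim Y > 1$ (which holds since $Y$ strongly tensor-generates a category with more than one simple object — if $\C^*_{\mathcal{M}}$ were trivial the statement is vacuous or $\Z$ embeds trivially) together with the Frobenius--Perron estimate $\sum_{W}\dim\Hom(W,Z\otimes Y)\, d(W) = d(Z)\, d(Y)$. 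Handling the edge case where $\C^*_{\mathcal{M}}$ is pointed (all simples invertible) separately — there $B^Y$ is a unital UHF or matrix algebra and the claim is classical for $\Gamma$-actions — cleanly disposes of the degenerate possibilities, and otherwise $d(Y)$ is a non-integer $> 1$ forcing genuine multiplicity growth.
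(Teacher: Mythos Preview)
Your reduction via Theorem~\ref{thm:BYintrocentralsequence} to the problem of embedding $\Z$ unitally into $B^{Y}$ is exactly the paper's starting point, and your Bratteli-diagram analysis correctly identifies that strong tensor generation makes the diagram primitive after telescoping $N$ steps. But you stop one inference short of the clean argument and then head in a direction that cannot work. The paper simply observes that a primitive Bratteli diagram forces $B^{Y}$ to be a \emph{simple} unital AF-algebra (this is the classical criterion, \cite[Chapter~6]{MR0623762}), and every infinite-dimensional simple unital AF-algebra is $\Z$-stable --- for instance because it has nuclear dimension zero. From $B^{Y}\cong B^{Y}\otimes\Z$ one immediately reads off a unital copy of $\Z$, and the corollary follows.

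Your alternative route --- producing a unital infinite-type UHF subalgebra of $B^{Y}$ --- genuinely fails, not merely for lack of detail. Following a single simple object through the diagram yields matrix algebras sitting as \emph{corners} of the $B^{Y}_{n}$, so the resulting UHF embeds only non-unitally. Worse, a unital embedding $M_{p^{\infty}}\hookrightarrow B^{Y}$ is often impossible in this setting. Take $\C^{*}_{\mathcal{M}}$ to be the Fibonacci category and $Y=\tau$ (this arises from $\C=\mathcal{M}=\mathrm{Fib}$): the fusion matrix $\left(\begin{smallmatrix}0&1\\1&1\end{smallmatrix}\right)$ lies in $GL_{2}(\mathbb{Z})$, so $K_{0}(B^{Y})\cong\mathbb{Z}^{2}$ as an abelian group, and there is no nonzero group homomorphism $\mathbb{Z}[1/p]\to\mathbb{Z}^{2}$ whatsoever, let alone a positive unital one. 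Hence this $B^{Y}$ contains no unital UHF subalgebra, and your Frobenius--Perron growth estimates --- while correct --- cannot rescue the argument. Replace the UHF hunt with the simplicity observation you already have in hand and the proof is complete.
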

\begin{proof}
    Since $Y$ is a strong tensor generator, 
    there exists an $n \in \N$ such that every  simple object of $\C^{*}_{\mathcal{M}}$ is isomorphic to a summand of $Y^{\otimes n}$.
    Therefore, the Bratteli diagram for $B^{Y}$ given by tensoring $Y^{\otimes n}$ at each stage has an adjacency matrix with all entries non-zero. It follows that $B^{Y}$ is a simple AF-algebra (see for example \cite[Chapter 6]{MR0623762}). 
    Hence, $B^{Y}$ is $\Z$-stable.
    The result now follows by combining Theorem \ref{thm:BYintrocentralsequence} with Theorem \ref{thm:eqMcDuff}.
\end{proof}

\bibliographystyle{abbrv}

\end{document}